
\documentclass[11pt,reqno,tbtags,draft]{amsart}
\usepackage{amssymb}
\usepackage{url}
\usepackage[square,numbers]{natbib}
\usepackage{bm}
\bibpunct[, ]{[}{]}{;}{n}{,}{,}

\title[On the probability of a Pareto record]
{On the probability of a Pareto record}

\newcommand\urladdrx[1]{{\urladdr{\def~{{\tiny$\sim$}}#1}}}
\author{James Allen Fill}
\address{Department of Applied Mathematics and Statistics,
The Johns Hopkins University,
3400 N.~Charles Street,
Baltimore, MD 21218-2682 USA}
\email{jimfill@jhu.edu}
\urladdrx{http://www.ams.jhu.edu/~fill/}
\thanks{Research for both authors supported by
the Acheson~J.~Duncan Fund for the Advancement of Research in
Statistics.}

\author{Ao Sun}
\address{Department of Applied Mathematics and Statistics, 
The Johns Hopkins University,
3400 N.~Charles Street,
Baltimore, MD 21218-2682 USA}
\email{asun17@jhu.edu}

\makeatletter
\@namedef{subjclassname@2020}{\textup{2020} Mathematics Subject Classification}
\makeatother

\keywords{Multivariate records, Pareto records, maxima, Dirichlet distributions, scale mixtures, negative dependence, positive association, negative record-setting probability dependence, positive record-setting probability dependence, likelihood ratio ordering, stochastic ordering}
\subjclass[2020]{Primary:\ 60G70; Secondary:\ 60D05}
%
%
%
%
%
%
%
%
%
%

\overfullrule 0pt 


\numberwithin{equation}{section}

\allowdisplaybreaks





\theoremstyle{plain}
\newtheorem{theorem}{Theorem}[section]
\newtheorem{lemma}[theorem]{Lemma}
\newtheorem{proposition}[theorem]{Proposition}
\newtheorem{corollary}[theorem]{Corollary}

\theoremstyle{definition}

\newtheorem{definition}[theorem]{Definition}

\newtheorem{remark}[theorem]{Remark}

\newtheorem*{acks}{Acknowledgments}

\theoremstyle{remark}

\newenvironment{romenumerate}[1][-10pt]{
\addtolength{\leftmargini}{#1}\begin{enumerate}
 }{\end{enumerate}}

\newcounter{oldenumi}
{\setcounter{oldenumi}{\value{enumi}}
\begin{romenumerate} \setcounter{enumi}{\value{oldenumi}}}
{\end{romenumerate}}

\newcounter{thmenumerate}

\newcounter{xenumerate}   



\newcommand{\refT}[1]{Theorem~\ref{#1}}
\newcommand{\refC}[1]{Corollary~\ref{#1}}
\newcommand{\refL}[1]{Lemma~\ref{#1}}
\newcommand{\refR}[1]{Remark~\ref{#1}}
\newcommand{\refS}[1]{Section~\ref{#1}}

\newcommand{\refP}[1]{Proposition~\ref{#1}}
\newcommand{\refD}[1]{Definition~\ref{#1}}

\newcommand{\refF}[1]{Figure~\ref{#1}}


\newcommand\marginal[1]{\marginpar{\raggedright\parindent=0pt\tiny #1}}
\newcommand\REM[1]{{\raggedright\texttt{[#1]}\par\marginal{XXX}}}

\begingroup
  \count255=\time
  \divide\count255 by 60
  \count1=\count255
  \multiply\count255 by -60
  \advance\count255 by \time
  \ifnum \count255 < 10 \xdef\klockan{\the\count1.0\the\count255}
  \else\xdef\klockan{\the\count1.\the\count255}\fi
\endgroup




\def\rompar(#1){\textup(#1\textup)}    

\def\xexp(#1){e^{#1}}

\newcommand\punkt{.\spacefactor=1000}    
\newcommand\iid{i.i.d\punkt}
\newcommand\ie{i.e\punkt}
\newcommand\eg{e.g\punkt}


\newcommand\bbR{\mathbb R}

\newcounter{CC}
\newcounter{cc}

\newcommand\E{\operatorname{\mathbb E{}}}
\renewcommand\P{\operatorname{\mathbb P{}}}

\newcommand\Cov{\operatorname{Cov}}

\newcommand\ga{a} 
\newcommand\gb{b} 

\newcommand\Beta{{\rm B}}
\newcommand\gG{G} 

\newcommand\cL{{\mathcal L}}

\newcommand\bb{\mathbf b}

\newcommand\xx{\mathbf x}
\newcommand\yy{\mathbf y}

\newcommand\XX{\mathbf X}
\newcommand\YY{\mathbf Y}

\newcommand\gGG{\bm{\gG}}

\newcommand\dd{\,\mathrm{d}}
\newcommand\ddx{\mathrm{d}}

\newcommand\tC{{\widetilde C}}
\newcommand\tF{{\widetilde F}}
\newcommand\tG{{\widetilde H}} 

\newcommand\tXX{{\widetilde \XX}}

\newcommand\hF{\widehat F}

\newcommand\hg{\hat{g}}
\newcommand\hH{\widehat H}
\newcommand\hp{\hat{p}}
\newcommand\hW{\widehat W}

\newcommand\hXX{\widehat \XX}





\newcommand\tp{\tilde p}

\newcommand\doi{D_{01}}

\newcommand\RSP{{\rm RP}}
\newcommand\NRSPD{{\rm NRPD}}
\newcommand\PRSPD{{\rm PRPD}}

\newcommand{\ignore}[1]{}




\hyphenation{Upp-sala}


%
%
\usepackage{tikz}
\usepackage{amssymb}
\usetikzlibrary{arrows,positioning}
\usepackage[english]{babel}
\usepackage{pgfplotstable}
\usepackage{pgfplots}
\usepackage{adjustbox}
\pgfplotsset{compat=1.3}

\begin{document}

\date{February~20, 2024. Revised May~5, 2024.}

\begin{abstract}
Given a sequence of independent random vectors taking values in ${\mathbb R}^d$ and having common continuous distribution function~$F$, say that the $n^{\rm \scriptsize th}$ observation \emph{sets a (Pareto) record} if it is not dominated (in every coordinate) by any preceding observation.  Let $p_n(F) \equiv p_{n, d}(F)$ denote the probability that the $n^{\rm \scriptsize th}$ observation sets a record.  There are many interesting questions to address concerning $p_n$ and multivariate records more generally, but this short paper focuses on how $p_n$ varies with~$F$, particularly if, under~$F$, the coordinates exhibit negative dependence or positive dependence (rather than independence, a more-studied case).  We introduce new notions of negative and positive dependence ideally suited for such a study, called \emph{negative record-setting probability dependence} (NRPD) and \emph{positive record-setting probability dependence} (PRPD), relate these notions to existing notions of dependence, and for fixed $d \geq 2$ and $n \geq 1$ prove that the image of the mapping $p_n$ on the domain of NRPD (respectively, PRPD) distributions is $[p^*_n, 1]$ 
(resp.,\ $[n^{-1}, p^*_n]$), where $p^*_n$ is the record-setting probability for any continuous~$F$ governing independent coordinates. 
\end{abstract}

\maketitle

\section{Introduction, background, and main results}\label{S:intro}

\subsection{Introduction, notation, and definitions}\label{SS:intro}

We begin with some definitions, including the \refD{D:record} of (multivariate) records as studied in this paper.
For $\xx, \yy \in \bbR^d$, we write $\xx \leq \yy$ or $\yy \geq \xx$ to mean that $x_j \leq y_j$ for $1 \leq j \leq d$, and we write $\xx \prec \yy$ or $\yy \succ \xx$ to mean that $x_j < y_j$ for $1 \leq j \leq d$.  
For $\xx \in \bbR^d$, we use the usual notation $\|\xx\|_1 := \sum_{j = 1}^d |x_j|$.  We use the standard notation $\implies$ for weak convergence of probability measures in Euclidean spaces (or their distribution functions).

Throughout this paper, $\XX^{(1)}, \XX^{(2)}, \dots$ are assumed to be \iid\ (independent and identically distributed) copies of a $d$-dimensional random vector~$\XX$ with distribution function~$F$ and law (or distribution) denoted by $\cL(\XX)$.  \emph{Throughout the paper we restrict attention to continuous~$F$}, mainly to avoid the complicating mathematical nuisance of ties, as explained in \refR{R:continuous}(d).

\begin{remark}
\label{R:continuous}
(a)~As noted by a reviewer of a previous draft, a distribution function~$F$ on $\bbR^d$ is continuous if and only if each of its~$d$ univariate marginals is.  This is easy to prove from the observation in \cite[Section~3 (only in first edition)]{BillingsleyCPM(1968)} that~$F$ corresponding to random vector~$\XX$ is continuous at $\xx \in \bbR^d$ if and only if $F(\xx) = \P(\XX \prec \xx)$.

(b)~Specializing~(a) to $d = 1$, the distribution function of a random variable~$Y$ is continuous if and only if
$\P(Y = y) = 0$ for each $y \in \bbR$.  

(c)~We note in passing, however, that, in contradistinction to~(b), atomlessness of a random vector does \emph{not} imply continuity of the distribution function in dimensions~$2$ and higher; see, \eg,\ \cite[Section~8.5]{Chow(1997)}.

(d)~Combining (a)--(b), it follows that, if the $d$-dimensional random vector~$\XX$ has continuous distribution function~$F$, then almost surely for every $1 \leq j \leq d$ there are no ties among $X^{(1)}_j, X^{(2)}_j, \ldots$.   
\end{remark}

\begin{definition}
\label{D:record}
(a)~For $n \geq 1$, we say that $\XX^{(n)}$ is a \emph{(Pareto) record} (or that it \emph{sets} a record at time~$n$) if $\XX^{(n)} \leq \XX^{(i)}$ fails for all $1 \leq i < n$.

(b)~If $1 \leq k \leq n$,
we say that $\XX^{(k)}$ is a \emph{current record} (or \emph{remaining record}, or \emph{maximum}) at time~$n$ 
if $\XX^{(k)} \leq \XX^{(i)}$ fails for all $1 \leq i \leq n$ with $i \neq k$.

(c)~For $n \geq 1$ 
we let 
$R_n$ denote the number of records $\XX^{(k)}$ with $1 \leq k \leq n$ and let $r_n$ denote the number of remaining records at time~$n$.
\end{definition}

\begin{remark}
\label{R:monotrans}
It is clear from \refD{D:record} that if $\tXX = (g_1(X_1), \ldots, g_d(X_d))$ where $g_1, \ldots, g_d$ are strictly increasing transformations, then the stochastic processes $(R_n)$ and $(r_n)$ are the same for the \iid\ sequence $\tXX^{(1)}, \tXX^{(2)}, \ldots$ as for $\XX^{(1)}, \XX^{(2)}, \ldots$.  Further, since we assume that~$F$ is continuous, it follows from \refR{R:continuous} that the distribution function~$\widetilde{F}$ of~$\tXX$ is also continuous.  
\end{remark}

\begin{remark}
We note that the expected number $r_n$ of maxima at time~$n$ is~$n$ times the probability that $\XX^{(n)}$ sets a record.  Thus our main Theorems~\ref{T:rangeNRSPD}--\ref{T:rangePRSPD} about record-setting probabilities also gives information about the expected number of maxima when \iid\ vectors are sampled.
\end{remark}

Omitting, for now, any dependence on~$F$ or~$d$ from the notation, the probability $p_n$ that $\XX^{(n)}$ sets a record is given by
\begin{align}
p_n &= \int\!\P(\XX \in \ddx \xx) [1 - \P(\XX \geq \xx)]^{n - 1} 
= \int\!\ddx F(\xx) [1 - H(-\xx)]^{n - 1} \nonumber \\
&= \int\!\ddx H(\yy) [1 - H(\yy)]^{n - 1}
= \E[1 - H(-\XX)]^{n - 1}, \label{pformula}  
\end{align}
where~$H$ denotes the distribution function corresponding to $- \XX$.

\begin{remark}
\label{R:manyto1}
For fixed~$d$ and~$n$, the mapping from~$F$ (equivalently, from~$H$) to $p_n$ is many-to-one; recall \refR{R:monotrans}.  In particular, $p_n$ has the same value for all continuous~$F$ such that the coordinates of~$\XX$ are independent.
\end{remark}

\subsection{RP equivalence classes, RP ordering, and the main results} \label{S:RSP}

We see from~\eqref{pformula} that the sequence $(p_n)_{n \geq 1}$ of record-setting probabilities is determined by $\cL(H(-\XX))$.  Conversely, since the distribution of a bounded random variable is determined by its moments, 
$\cL(H(-\XX))$ is determined by $(p_n)_{n \geq 1}$.  We are thus led to define an equivalence relation on (continuous) $d$-dimensional distribution functions~$F$ (for each fixed~$d$) by declaring that 
$F \sim \tF$ if $H(-\XX)$ and $\tG(-\tXX)$ have the same distribution, where $F$, $\widetilde{F}$, $H$, and 
$\tG$ are the distribution functions of $\XX$, $ - \XX$, $\tXX$, and $ - \tXX$, respectively; we 
call this the \emph{record-setting probability} (RP) \emph{equivalence}.

We are now prepared to define a partial order on the RP equivalence classes.

\begin{definition}
\label{D:po}
Let~$C$ and~$\tC$ be RP equivalence classes with (arbitrarily chosen) respective representatives~$F$ and~$\tF$.  We say that \emph{$C \leq \tC$ in the \RSP\ ordering} (or, by abuse of terminology, that $F \leq \tF$ in the 
\RSP\ ordering) if $H(-\XX) \geq \tG(-\tXX)$ stochastically. 
\end{definition}

\begin{remark}
\label{R:Fp}
From~\eqref{pformula} it follow immediately that if $F \leq \tF$ in the \RSP\ ordering, then $p_n \leq \tp_n$ for every~$n$.
\end{remark}

Let~$C^*$ denote the \RSP\ equivalence class corresponding to independent coordinates.  We next introduce new notions of negative dependence and positive dependence; we relate these notions to more standard notions later, in \refS{S:NandP}.  

\begin{definition}
\label{D:NRSPD}
We will say that $F$ is \emph{negatively record-setting probability dependent} (NRPD) if its 
\RSP\ equivalence class~$C$ satisfies $C \geq C^*$ in the \RSP\ ordering.
\end{definition}

\begin{definition}
\label{D:PRSPD}
We will say that $F$ is \emph{positively record-setting probability dependent} (PRPD) if its 
\RSP\ equivalence class~$C$ satisfies $C \leq C^*$ in the \RSP\ ordering.
\end{definition}

\begin{remark}
Thus any~$F$ having independent coordinates is both NRPD and PRPD.
\end{remark}

We can now state our two main results.  For both, let $p_n(F) \equiv p_{n, d}(F)$ denote the probability that the $n^{\rm \scriptsize th}$ observation $\XX^{(n)}$ from the (continuous) distribution~$F$ sets a record, and let $p^*_n$ denote the value when $F \in C^*$.

\begin{theorem}
\label{T:rangeNRSPD}
For each fixed $d \geq 2$ and $n \geq 1$ the image of the mapping $p_n$ on the domain of \NRSPD\ distributions is precisely the interval $[p^*_n, 1]$.
\end{theorem}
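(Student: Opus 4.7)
The theorem requires showing (a)~that $p_n(F) \in [p^*_n, 1]$ for every \NRSPD\ $F$, and (b)~that every value in $[p^*_n, 1]$ is attained by some \NRSPD\ $F$.  I would establish~(a) directly from the definitions and~(b) by constructing an explicit one-parameter family of \NRSPD\ distributions whose $p_n$-values sweep out the entire interval.

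For direction~(a), the upper bound $p_n(F) \leq 1$ is trivial.  The lower bound $p_n(F) \geq p^*_n$ follows immediately from \refD{D:NRSPD} and \refR{R:Fp}: being \NRSPD\ means the \RSP\ equivalence class~$C$ of~$F$ satisfies $C^* \leq C$ in the \RSP\ ordering, whence $p^*_n = p_n(F^*) \leq p_n(F)$.

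For direction~(b), I would construct, for each $\alpha \in [0,1]$, a distribution $F_\alpha$ as the law of $\XX_\alpha$ obtained from the following two-component mixture.  Let $\XX^* = (X^*_1,\ldots,X^*_d)$ have independent coordinates with $X^*_1$ uniform on $(0,\tfrac12)$, $X^*_2$ uniform on $(\tfrac12,1)$, and $X^*_j$ uniform on $(0,1)$ for $j \geq 3$; let $\ZZ = (V, 1-V, Z_3, \ldots, Z_d)$ with $V$ uniform on $(\tfrac12,1)$ and $Z_3,\ldots,Z_d$ iid uniform on $(0,1)$; and let $B_\alpha$ be Bernoulli($\alpha$), everything independent.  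Set $\XX_\alpha = \ZZ$ if $B_\alpha = 1$, else $\XX_\alpha = \XX^*$.  The key geometric facts are that the supports of $\XX^*$ and $\ZZ$ are pairwise incomparable almost surely (forced by the first two coordinates: $X^*_1 \leq \tfrac12 < V = Z_1$, while $Z_2 = 1-V < \tfrac12 \leq X^*_2$), and that $\ZZ$ is supported on an antichain (since $V \neq V'$ a.s.\ for independent copies).  A short computation then shows that $H_\alpha(-\XX_\alpha)$ is distributed as $(1-\alpha)\prod_{j=1}^d U_j$ (with $U_j$ iid uniform on $(0,1)$) on the event $\{\XX_\alpha = \XX^*\}$ and equals $0$ on the event $\{\XX_\alpha = \ZZ\}$.  \NRSPD\ then follows from the elementary bound
\[
\P\bigl(H_\alpha(-\XX_\alpha) \geq s\bigr) = (1-\alpha)\,\P\Bigl(\textstyle\prod_{j=1}^d U_j \geq s/(1-\alpha)\Bigr) \leq \P\Bigl(\textstyle\prod_{j=1}^d U_j \geq s\Bigr)
\]
for $s \in (0, 1-\alpha]$ (using $1-\alpha \leq 1$ together with monotonicity in~$s$), and $\P(H_\alpha(-\XX_\alpha) \geq s) = 0$ for $s > 1-\alpha$.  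The map $\alpha \mapsto p_n(F_\alpha) = \alpha + (1-\alpha)\,\E[(1-(1-\alpha)\prod_{j=1}^d U_j)^{n-1}]$ is continuous in $\alpha \in [0,1]$, with values $p^*_n$ at $\alpha = 0$ and $1$ at $\alpha = 1$; the intermediate value theorem delivers the entire interval.

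The main obstacle I anticipate is keeping the whole interpolating family inside the \NRSPD\ class.  A naive mixture $(1-\alpha) F^* + \alpha F^\#$ on a common cube $[0,1]^d$ generates cross-term ``ghost'' probabilities (e.g., an independent sample dominating an antichain sample) that can violate the required stochastic-dominance inequality for intermediate~$\alpha$.  The design choice above---placing $\XX^*$ and $\ZZ$ on incomparable regions of~$\bbR^d$ via the first two coordinates---is exactly what zeros out these cross terms and reduces the \NRSPD\ verification to the one-line inequality displayed above.
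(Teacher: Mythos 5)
Your proposal is correct, and it reaches the conclusion by a genuinely different route from the paper. The paper fills $(p^*_n,1)$ with a single ``natural'' one-parameter family, the marginalized-Dirichlet distributions $F_a$ (first $d$ coordinates of Dirichlet$(1,\ldots,1,a)$): there $H_a(-\XX)=(1-\|\XX\|_1)^{d+a-1}$ with $1-\|\XX\|_1\sim\mathrm{Beta}(a,d)$, a likelihood-ratio computation shows this is stochastically increasing in~$a$ (so $F_a$ is strictly decreasing in the \RSP\ ordering and, in particular, every $F_a$ is \NRSPD\ because the $a\to\infty$ limit is the independent class $C^*$), and the interval is swept out using weak-convergence continuity of $p_n$ (Scheff\'e plus \refP{P:p is continuous}) together with identification of the two weak limits ($a\XX(a)$ tending to independent Exponentials as $a\to\infty$, and $\XX(a)$ tending to $\mathrm{Dir}(\mathbf 1)$ as $a\to0$); the endpoints come from \refL{L:extremes}. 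You instead interpolate between the two extremes by an explicit two-component mixture whose components live on incomparable regions of $\bbR^d$, which kills all cross-domination terms, makes the law of $H_\alpha(-\XX_\alpha)$ completely explicit (an atom at $0$ plus a scaled copy of $\prod_j U_j$), reduces the \NRSPD\ verification to a one-line tail inequality, and turns $p_n(F_\alpha)$ into a polynomial in $\alpha$ to which the intermediate value theorem applies directly. Your route is more elementary and self-contained --- it needs neither the continuity of $p_n$ under weak convergence nor any likelihood-ratio monotonicity --- at the cost of using an ad hoc family rather than one with independent interest; the paper's choice is deliberate, since the Dirichlet family is monotone in the \RSP\ ordering and is slated for further study. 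Two small points you should make explicit in a full write-up: each marginal of $F_\alpha$ is a mixture of uniforms and hence continuous, so $F_\alpha$ is a continuous distribution function as required by the paper's standing assumption (via \refR{R:continuous}(a)); and the stochastic-dominance inequality for $s\le 0$ holds trivially since both tail probabilities equal~$1$ there.
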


\begin{theorem}
\label{T:rangePRSPD}
For each fixed $d \geq 1$ and $n \geq 1$ the image of the mapping $p_n$ on the domain of \PRSPD\ distributions is precisely the interval $[n^{-1}, p^*_n]$.
\end{theorem}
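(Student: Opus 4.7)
The plan is to establish the two bounds separately and then realize every intermediate value via a continuous one-parameter interpolation. The upper bound $p_n(F) \le p_n^*$ for PRPD $F$ is immediate from the definition of PRPD combined with \refR{R:Fp}. For the lower bound $p_n(F) \ge 1/n$ we use only the exchangeability of the iid samples $\XX^{(1)},\ldots,\XX^{(n)}$: the probability that $\XX^{(k)}$ is a current record at time $n$ equals $p_n$ for every $1 \le k \le n$, so $\E r_n = n p_n$. Since a.s.\ there is at least one current record at time $n$ (for instance, the a.s.-unique sample maximizing the first coordinate cannot be dominated), $r_n \ge 1$ a.s., and hence $p_n \ge 1/n$. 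Notice this lower bound holds for every continuous $F$, not merely PRPD ones.

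For endpoint achievability, any continuous $F$ with independent coordinates lies in $C^*$ and is trivially PRPD with $p_n(F) = p_n^*$. For the other endpoint, take $\XX := (Y,\ldots,Y)$ with $Y$ having continuous distribution function $G$. Then $F(\xx) = G(\min_j x_j)$ is continuous, a short computation gives $H(-\XX) = 1 - G(Y)$, uniform on $[0,1]$, while $H^*(-\XX^*) \eqd \prod_{j=1}^d (1-U_j) \le 1 - U_1$ pointwise (with $U_1,\ldots,U_d$ iid uniform). Since the dominating variable $1 - U_1$ is uniform, $H(-\XX) \ge H^*(-\XX^*)$ stochastically, so this comonotone $F$ is PRPD; and since exactly one current record exists a.s.\ at time $n$ (the sample maximizing $Y$), $r_n = 1$ and $p_n = 1/n$.

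To fill in the interval we consider the family $\XX^{(\alpha)} := \alpha(V,\ldots,V) + (1-\alpha)(U_1,\ldots,U_d)$ for $\alpha \in [0,1]$, where $V,U_1,\ldots,U_d$ are iid uniforms; this recovers the independent case at $\alpha = 0$ and the comonotone case at $\alpha = 1$, has continuous marginals, and a joint law depending continuously on $\alpha$. A standard weak-convergence argument (the record event has boundary of probability zero thanks to continuous marginals) makes $\alpha \mapsto p_n(\XX^{(\alpha)})$ continuous, so the intermediate value theorem yields every value in $[1/n,p_n^*]$. The main technical obstacle is verifying that each $\XX^{(\alpha)}$ is PRPD; the cleanest route is probably either (i) to display an explicit coordinatewise-monotone coupling of $H^{(\alpha)}(-\XX^{(\alpha)})$ with $H^*(-\XX^*)$ exploiting the common factor $V$, or (ii) to appeal to a standard positive-dependence notion (shown in \refS{S:NandP} to imply PRPD) that holds readily for factor models of the form $X_j = g_j(V, U_j)$. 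For $d = 1$ the theorem is trivial, as $p_n = 1/n = p_n^*$ for every continuous $F$.
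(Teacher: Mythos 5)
Your bounds and endpoint verifications are sound: the upper bound from \refR{R:Fp}, the lower bound $p_n\ge n^{-1}$ via $\E r_n = n p_n$ and $r_n\ge 1$ a.s.\ (the paper's \refL{L:extremes} argues this slightly differently but equivalently), and in particular your check that the comonotone law $(Y,\ldots,Y)$ is PRPD --- comparing the uniform variable $1-G(Y)$ with $\prod_{j}(1-U_j)\le 1-U_1$ --- is correct and is a point the paper leaves implicit. The skeleton (endpoints, a continuous one-parameter family, intermediate value theorem via \refP{P:p is continuous}) also matches the paper's strategy.

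The genuine gap is exactly the step you flag as ``the main technical obstacle'': you never verify that the convex-combination family $\XX^{(\alpha)}=\alpha(V,\ldots,V)+(1-\alpha)(U_1,\ldots,U_d)$ is PRPD, and neither of your proposed escape routes is available as stated. Route (ii) is a dead end: \refS{S:relate} explicitly says the authors are \emph{unable} to prove that PA implies PRPD (they only get $p_2\le p_2^*$), so there is no ``standard positive-dependence notion shown in \refS{S:NandP} to imply PRPD'' to appeal to. Route (i) is also not routine: conditioning on the independent copy's common factor $V'$ and using Chebyshev's association inequality gives $H^{(\alpha)}(-\XX^{(\alpha)})\ge \prod_j\bigl(1-F_j(X^{(\alpha)}_j)\bigr)$ pointwise, but the $F_j(X^{(\alpha)}_j)$ are positively dependent uniforms, and a product of positively associated uniforms need not stochastically dominate a product of independent ones (association only yields moment inequalities, which do not imply stochastic order). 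The paper sidesteps this entire difficulty by choosing a very particular family, $\hF_a=\cL(\gG_1/\gG,\ldots,\gG_d/\gG)$ with $\gG\sim\mathrm{Gamma}(a)$ independent of \iid\ Exponentials $\gG_j$, for which $H_a(-\XX)=(1+\|\XX\|_1)^{-a}=Z_{a,d}^{\,a}$ with $Z_{a,d}\sim\mathrm{Beta}(a,d)$ is fully explicit; a direct likelihood-ratio computation (\refP{P:mono2} and its lemma) shows $Z_{a,d}^{\,a}$ is stochastically decreasing in $a$ with limit $\prod_j(1-U_j)$ as $a\to\infty$, which simultaneously yields PRPD for every $a$, strict monotonicity of $\hp_n(a)$, and the two limits $\hp_n(\infty-)=p_n^*$ and $\hp_n(0+)=n^{-1}$. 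To complete your proof you would need either to carry out an analogous explicit computation for your family (which does not appear tractable, since $H^{(\alpha)}(-\XX^{(\alpha)})$ has no closed form) or to switch to a family for which the RP-ordering comparison with $C^*$ can actually be established.
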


\begin{remark}
(a)~For $d = 1$ and $n \geq 2$ the conclusion of \refT{T:rangeNRSPD} is false, since then $p_{n, 1}(F) \equiv n^{-1}$.  

(b)~For $n = 1$ the results of Theorems \ref{T:rangeNRSPD}--\ref{T:rangePRSPD} are trivial, since we have $p_{1, d}(F) \equiv 1$; so in proving the theorems we may assume $n \geq 2$.
\end{remark} 

\begin{corollary}
\label{C:range}
For fixed $d \geq 2$ and $n \geq 1$ the image of the mapping $p_n$ on the domain of all continuous distributions~$F$ is precisely the interval $[n^{-1}, 1]$, irrespective of~$d$.
\end{corollary}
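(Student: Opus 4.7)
The plan is to deduce the corollary directly from Theorems~\ref{T:rangeNRSPD} and~\ref{T:rangePRSPD} together with a trivial two-sided bound on $p_n$ that holds for \emph{every} continuous~$F$.

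First, I would note that every \NRSPD\ or \PRSPD\ distribution is in particular a continuous distribution, so the image of $p_n$ on the full domain of continuous~$F$ automatically contains the union of the two image intervals supplied by those theorems, namely
\[
[p^*_n, 1] \cup [n^{-1}, p^*_n] = [n^{-1}, 1].
\]
This gives one of the two inclusions needed, with no further work.

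For the reverse inclusion, I need to verify that $p_n(F) \in [n^{-1}, 1]$ for every continuous~$F$. The upper bound is immediate since $p_n(F)$ is a probability. For the lower bound I would use the identity $p_n(F) = n^{-1}\,\E r_n$, which is the remark following \refD{D:record}: by exchangeability of $\XX^{(1)}, \ldots, \XX^{(n)}$, each of the $n$ indices~$k$ is equally likely to correspond to a remaining record at time~$n$, and for $k = n$ the remaining-record event coincides with the record-setting event, since parts~(a) and~(b) of \refD{D:record} both require that $\XX^{(n)} \leq \XX^{(i)}$ fail for all $1 \leq i < n$. Because any finite set of vectors in $\bbR^d$ has at least one Pareto-maximal element, $r_n \geq 1$ almost surely, which gives $p_n(F) \geq n^{-1}$.

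There is essentially no obstacle; the corollary is a one-line consequence of the two main theorems together with the universal bound $p_n \geq n^{-1}$, whose only substance is exchangeability. The only small point worth checking is that "record at time~$n$" and "remaining record at time~$n$ for $k = n$" mean the same thing, which is transparent from \refD{D:record}.
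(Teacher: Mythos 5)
Your proposal is correct and matches the paper's own argument: the paper likewise deduces the corollary by combining the universal bound $p_n \in [n^{-1}, 1]$ (its \refL{L:extremes}) with the union of the two image intervals from Theorems~\ref{T:rangeNRSPD}--\ref{T:rangePRSPD}. The only cosmetic difference is that you derive $p_n \geq n^{-1}$ from $p_n = n^{-1}\E r_n$ and $r_n \geq 1$ a.s.\ (which tacitly uses the a.s.\ absence of ties from \refR{R:continuous}(d)), whereas the paper observes directly that a first-coordinate record forces a $d$-dimensional record --- the same underlying fact.
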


We outline here the strategy, as illustrated 
in \refF{F:oneslide} and carried out in \refS{S:proofmain}, for proving 
Theorems \ref{T:rangeNRSPD}--\ref{T:rangePRSPD} (and subsequently \refC{C:range}).  
Let $\mathcal{R}_{{\rm N}}$ and 
$\mathcal{R}_{{\rm P}}$ denote the respective images.  It is immediate from our definitions that 
$\mathcal{R}_{{\rm N}} \subseteq [p^*_n, 1]$ and $\mathcal{R}_{{\rm P}} \subseteq [0, p^*_n]$, and by considering just first coordinates (see \refL{L:extremes}) we quickly narrow the latter to $\mathcal{R}_{{\rm P}} \subseteq [n^{-1}, p^*_n]$.  To show the reverse containments, we then fill the interval $[p^*_n, 1]$ with elements of 
$\mathcal{R}_{{\rm N}}$ by choosing distribution functions~$F$ from a certain class of marginalized-Dirichlet distributions and their weak limits, and we fill the interval 
$[n^{-1}, p^*_n]$ with elements of $\mathcal{R}_{{\rm P}}$ by choosing distribution functions~$F$ from a certain class of distributions with positively associated coordinates (more specifically, certain scale mixtures 
of \iid\ Exponential distributions) and their weak limits.

	\begin{figure}
		\centering
		\begin{tikzpicture}[scale=10, >=stealth]
			\draw (0, 0) -- (1, 0); 
\foreach \x/\xtext in {0, 0.1/\frac{1}{n}, 0.4/p^*_n, 1}
\draw (\x,.5pt) -- (\x,-.5pt) node[anchor=north] {$\xtext$}; 

\draw[->, blue] (0.21, 1pt) -- (0.1, 1pt);
\draw[->, blue] (0.29, 1pt) -- (0.4, 1pt);
\node at (0.25, 1pt) [scale=0.8] {\textcolor{blue}{$\mathrm{PA}_a$}};

\draw[->, orange] (0.66, 1pt) -- (0.4, 1pt);
\draw[->, orange] (0.74, 1pt) -- (1, 1pt);
\node at (0.7, 1pt) [scale=0.8] {\textcolor{orange}{$\mathrm{Dir}_a$}};

\node at (0.16, 1.6pt) [scale=0.8] {\textcolor{blue}{$a\to 0$}};
\node at (0.85, 1.6pt) [scale=0.8] {\textcolor{orange}{$a\to 0$}};

\node at (0.34, 1.6pt) [scale=0.8] {\textcolor{blue}{$a\to \infty$}};
\node at (0.54, 1.6pt) [scale=0.8] {\textcolor{orange}{$a\to \infty$}};

\draw[dashed] (0.4, 6pt) node[anchor=south, scale=0.8] {\textcolor{blue}{$a\,\mathrm{PA}_a\overset{\mathcal{L}}{\longrightarrow}$} $\mathrm{Independent~Exp}(1)$ \textcolor{orange}{$\overset{\mathcal{L}}{\longleftarrow}a\,\mathrm{Dir}_a$}} -- (0.4, 0);

\draw[dashed, blue] (0.1, 0.5pt) -- (0.1, 3.5pt) node [anchor = south, scale = 0.8] {\textcolor{blue}{$\substack{a\,\ln{\mathrm{PA}_a}\overset{\mathcal{L}}{\longrightarrow}(Y, \ldots, Y)\\ \mathrm{with}~Y\sim\mathrm{Exp}(1)}$}};

\draw[dashed, orange] (1, 0.5pt) -- (1, 3.5pt) node [anchor = south, scale = 0.8] {$\substack{\mathrm{Dir}(\mathbf{1})\\\mathrm{with}~\|\mathbf{X}\|_1=1}$};


\node at (0.26, -4pt) [scale=0.8]  {$\textcolor{blue}{p_n} = \E{(1-Z_{a, d}^a)^{n-1}}$};

\node at (0.7, -4pt) [scale = 0.8] {$\textcolor{orange}{p_n} = \E{\bigl(1-Z_{a, d}^{d+a-1}\bigr)^{n-1}}$};

\node at (0.53, -8.5pt) [scale = 0.8] {\textcolor{black}{in stochastic ordering as $a\uparrow$}}; 

\node at (0.53, -9.5pt) [scale = 0.8] {\textcolor{black}{(because same is true in LR ordering)}};

\node at (0.05, -6pt) [scale = 0.8] {\textcolor{black}{\fbox{$Z_{a, d}\sim \mathrm{Beta}(a, d)$}}};

\draw[blue] (0.314,-4pt) node[minimum height=11pt,minimum width=15.2pt,draw] {};

\draw[orange] (0.752,-4pt) node[minimum height=13pt,minimum width=28pt,draw] {};

\draw[blue] (0.314, -4.55pt) -- (0.314, -6.5pt) node[anchor=north, scale=0.8] {strictly $\downarrow$};
\draw[orange] (0.752, -4.64pt) -- (0.752, -6.5pt) node[anchor=north, scale=0.8] {strictly $\uparrow$};
		\end{tikzpicture}
\caption{The strategy for proving Theorems \ref{T:rangeNRSPD}--\ref{T:rangePRSPD}; here the random variable ``PA$_a$'' has the PA distribution $\hF_a$ described in \refS{S:PAmon}.}
\label{F:oneslide}
	\end{figure}
	
\subsection{Brief literature review} \label{S:lit}

Let us mention some related literature concerning Pareto records; we continue to assume~$F$ is continuous throughout this review.  The book~\cite{Arnold(1998)} is a standard reference for univariate records (the case $d = 1$).  For multivariate records in the case of independent coordinates, we have already remarked that the 
record-setting probability $p_n = p^*_n$ does not depend on the distributions of the individual coordinates, but other aspects (such as the location of remaining records) do.  Usually, as in~\cite{Bai(2005)} (see also the reference therein), the coordinates are taken to be \iid,\ either Uniform(0, 1) or standard Exponential.  Bai et al.~\cite{Bai(2005)} obtain, for fixed~$d$ and for both $R_n$ and $r_n$, asymptotic expansions as $n \to \infty$ for the expected value and variance and a central limit theorem with a Berry--Esseen bound.  The main contributions of Fill and Naiman~\cite{Fillboundary(2020)} are localization theorems for the \emph{Pareto frontier}---\ie,\ the topological boundary between the record-setting region and its complement when coordinates are \iid\ standard Exponential---and some of those theorems are substantially sharpened in~\cite{Fillsharper(2023)}.  An importance-sampling algorithm for sampling records is presented, and partially analyzed, in~\cite{Fillgenerating(2018)}.  A limiting distribution (again, for fixed~$d$ as $n \to \infty$) is established for the number $r_{n - 1} + 1 - r_n$ of remaining records broken by $\XX^{(n)}$ conditionally given that $\XX^{(n)}$ sets a record, for $d = 2$ in~\cite{Fillbreaking(2021)} and for general~$d$
in~\cite{Fillmulti(2023)}. 

An underlying theme of the present paper is that it is interesting to see how results (for example, concerning asymptotics for moments and distributions for $R_n$ and $r_n$ and localization of the frontier) vary with~$F$.  When~$F$ is the uniform distribution on the $d$-dimensional simplex, Hwang and Tsai~\cite{Hwang(2010)} (see also the references therein, especially Bai et al.~\cite{Bai(2003)}) proceed in a fashion similar to that in~\cite{Bai(2005)} to obtain analogues of the asymptotic results of that earlier paper.  It is worth noting that the computations are more involved in the simplex case than in~\cite{Bai(2005)}, in part because results about $r_n$ no longer translate immediately to results about $R_n$ since the use of 
so-called concomitants (see \refR{R:concomitants}) becomes more involved, and that the results are enormously different; indeed, for example, as noted in the last line of the table on p.~1867 of~\cite{Hwang(2010)}, we have 
$\E r_n \sim (\ln n)^{d - 1} / (d - 1)!$ for independent coordinates while 
$\E r_n \sim \Gamma(1/d)\,n^{(d - 1) / d}$ for uniform sampling from the $d$-simplex.

\subsection{Organization} \label{S:org}

In \refS{S:gen} we record two simple but very useful general observations about the record-breaking probability $p_n$.  In \refS{S:indep} we briefly review the special case of independent coordinates.  In \refS{S:NandP} we relate the notions of NRPD and PRPD to existing notions of negative and positive dependence.  In \refS{S:Dirmon} we introduce and treat a class of examples of NRPD distributions~$F$ closely related to Dirichlet distributions and in \refS{S:PAmon} we introduce and treat a class of PRPD examples that are scale mixtures of \iid\ Exponential coordinates.  Finally, in \refS{S:proofmain} we prove Theorems~\ref{T:rangeNRSPD}--\ref{T:rangePRSPD} and \refC{C:range} and make a few additional remarks concerning the variability of~$p$.

\subsection{Manifesto} \label{S:man}

In light of Theorems~\ref{T:rangeNRSPD}--\ref{T:rangePRSPD} (see also \refF{F:oneslide} and the proof strategy discussed at the end of~\refS{S:RSP}), we regard the marginalized-Dirichlet NRPD distributions and the scale-mixture PRPD distributions we will use to prove the theorems, if not as canonical examples, then at least as standard examples worthy of thorough consideration---in particular, to study how the behaviors of these examples vary with their associated parameter values.  Accordingly, we regard this paper as a pilot study of sorts, and we are presently working to extend (most of) the results of references~\cite{Bai(2005)}, \cite{Hwang(2010)}, \cite{Fillboundary(2020)}--\cite{Fillsharper(2023)}, \cite{Fillgenerating(2018)}, \cite{Fillbreaking(2021)}, and~\cite{Fillmulti(2023)} to these two classes of examples.

\section{The record-breaking probability~$p_n$:\ general information}\label{S:gen}

To carry out our proof strategy for Theorems~\ref{T:rangeNRSPD}--\ref{T:rangePRSPD}, we first need a result that $p_n$ is continuous as a function of $\cL(\XX)$ at any continuous distribution on $\bbR^d$.  For this result (\refP{P:p is continuous}), we do not need to assume that the distributions of the random vectors $\XX(m)$ are continuous.

\begin{proposition}
\label{P:p is continuous}
Fix $d \geq 1$ and $n \geq 1$.  If $\XX(m)$ converges in distribution to~$\XX$ having a continuous distribution, then the corresponding record-setting probabilities satisfy $p_n(m) \to p_n$ as $m \to \infty$.
\end{proposition}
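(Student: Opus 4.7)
The plan is to express $p_n$ as the probability of an explicit event determined by $n$ independent copies of $\XX$, and then invoke the portmanteau theorem together with the no-ties property from \refR{R:continuous}(d). First, let $\XX^{(1)}(m),\ldots,\XX^{(n)}(m)$ be \iid\ copies of $\XX(m)$, and analogously $\XX^{(1)},\ldots,\XX^{(n)}$ \iid\ copies of $\XX$. Since $\XX(m)\implies\XX$ and product measures of weakly convergent sequences on a Polish space converge weakly, I would immediately obtain
\[(\XX^{(1)}(m),\ldots,\XX^{(n)}(m))\implies(\XX^{(1)},\ldots,\XX^{(n)})\quad\text{on }(\bbR^d)^n.\]

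Next, by \refD{D:record}(a), $p_n(m)=\P((\XX^{(1)}(m),\ldots,\XX^{(n)}(m))\in A)$ and $p_n=\P((\XX^{(1)},\ldots,\XX^{(n)})\in A)$, where
\[A:=\{(\xx_1,\ldots,\xx_n)\in(\bbR^d)^n:\xx_n\not\leq\xx_i\text{ for every }1\leq i<n\}.\]
The set $\{\xx_n\not\leq\xx_i\}=\bigcup_{j=1}^d\{x_{n,j}>x_{i,j}\}$ is open, so the finite intersection $A$ is open; moreover, any point of $A^c$ for which $\xx_n\prec\xx_i$ holds (strictly, in every coordinate) for some $i<n$ lies in the interior of $A^c$, since strict inequalities are preserved under small perturbations. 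Consequently $\partial A\subseteq\bigcup_{i<n}\bigcup_{j\leq d}\{x_{n,j}=x_{i,j}\}$.

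The main step is then to show that this boundary is null under the limit law, which is where continuity of $F$ is essential. By \refR{R:continuous}(a)--(b), each one-dimensional marginal of $\XX$ is continuous, hence atomless; so by independence and Fubini, $\P(X^{(n)}_j=X^{(i)}_j)=0$ for each $i<n$ and $j\leq d$. Summing over the finite collection of pairs yields $\P((\XX^{(1)},\ldots,\XX^{(n)})\in\partial A)=0$. The portmanteau theorem, applied to the open set $A$ with $\partial A$ null under the limit law, therefore gives $p_n(m)\to p_n$.

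There is no serious obstacle here; the argument rests only on the elementary fact that weak convergence of $\XX(m)$ lifts to weak convergence of $n$-fold product laws, and on the no-ties property of \refR{R:continuous}(d). The step most worth singling out is the geometric one---verifying $\partial A\subseteq\bigcup_{i,j}\{x_{n,j}=x_{i,j}\}$---which is precisely what lets one convert openness of $A$ into a null-boundary condition and thereby invoke portmanteau.
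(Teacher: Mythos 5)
Your proof is correct, but it takes a genuinely different route from the paper's. You realize $p_n(m)$ and $p_n$ as the masses assigned by the $n$-fold product laws to the explicit open record event $A$, verify that $\partial A$ is contained in the finitely many tie sets $\{x_{n,j}=x_{i,j}\}$, use atomlessness of the marginals of the limit law (via \refR{R:continuous}(a)--(b)) to make $A$ a continuity set, and conclude by the portmanteau theorem together with weak convergence of product measures. The paper instead works directly with the representation $p_n=\int\ddx H(\yy)\,[1-H(\yy)]^{n-1}$ from~\eqref{pformula}, where $H$ is the distribution function of $-\XX$: continuity of the limit $H$ upgrades $H_m\implies H$ to \emph{uniform} convergence of $H_m$ to $H$ (a multivariate \Polya-type result cited from Billingsley), whence the integrands $[1-H_m]^{n-1}$ converge uniformly and the integrals converge. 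Both arguments invoke the continuity hypothesis at exactly one point---you to annihilate $\partial A$, the paper to obtain uniformity---and both are complete. Yours is self-contained modulo two standard facts (weak convergence of finite products of weakly convergent laws, portmanteau for continuity sets) and makes the role of ties geometrically transparent; the paper's is shorter given the citation and reuses the integral formula that drives the rest of the paper. One small point worth stating explicitly in your write-up, which matches the paper's remark that continuity is needed only for the limit: the identification $p_n(m)=\P\bigl((\XX^{(1)}(m),\ldots,\XX^{(n)}(m))\in A\bigr)$ follows from \refD{D:record}(a) with no continuity assumption on $\cL(\XX(m))$.
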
 

\begin{proof}
The distribution functions $H_m$ of $- \XX(m)$ and $H$ of $- \XX$ satisfy $H_m\!\implies\!H$.  Moreover, $H$ is continuous, so $H_m(\yy)$ converges to $H(\yy)$ uniformly in~$\yy$ (\cite[Problem~3 in Section~3 (only in first edition)]{BillingsleyCPM(1968)})
and hence (recalling that~$n$ here is fixed) $[1 - H_m(\yy)]^{n - 1}$ converges to $[1 - H(\yy)]^{n - 1}$ uniformly in~$\yy$.  It follows that as $m \to \infty$ we have
\[
p_n(m) = \int\!\ddx H_m(\yy) [1 - H_m(\yy)]^{n - 1} \to \int\!\ddx H(\yy) [1 - H(\yy)]^{n - 1} = p_n.\qedhere
\]
\end{proof}
%

Our next result exhibits the smallest and largest possible values of $p_n$.
\begin{lemma}
\label{L:extremes}
Fix $d \geq 2$ and $n \geq 1$.  We always have $p_n \in [n^{-1}, 1]$, and $p_n = n^{-1}$ and $p_n = 1$ are both possible.
\end{lemma}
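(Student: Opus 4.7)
The plan is three-part: the upper bound $p_n \leq 1$ is trivial; the lower bound $p_n \geq n^{-1}$ will come from a symmetry identity; and both endpoints will be realized by elementary explicit continuous distributions.

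For the lower bound, I would use that the events ``$\XX^{(n)}$ sets a record'' and ``$\XX^{(n)}$ is a remaining record at time~$n$'' (in the sense of \refD{D:record}(b)) coincide, since there are no observations after time~$n$ to examine. Exchangeability of the \iid\ sequence $\XX^{(1)}, \ldots, \XX^{(n)}$ then yields $\P(\XX^{(k)} \text{ is a remaining record at time } n) = p_n$ for every $k = 1, \ldots, n$, so $\E r_n = n\,p_n$ (as already noted in the paper). Since any finite subset of $\bbR^d$ has at least one maximal element under the coordinatewise partial order, $r_n \geq 1$ always; hence $p_n = \E r_n / n \geq n^{-1}$.

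To achieve $p_n = n^{-1}$, I would take $\XX = (Y, Y, \ldots, Y)$ with $Y$ any continuous univariate random variable. All marginals of~$\XX$ are continuous, so~$F$ is continuous on~$\bbR^d$ by \refR{R:continuous}(a). The coordinatewise inequality $\XX^{(n)} \leq \XX^{(i)}$ reduces to $Y^{(n)} \leq Y^{(i)}$, so $\XX^{(n)}$ sets a record if and only if $Y^{(n)}$ is the unique maximum of $Y^{(1)}, \ldots, Y^{(n)}$, which has probability exactly $1/n$ by exchangeability and continuity of~$Y$.

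To achieve $p_n = 1$, I would take $\XX$ uniform on the open standard $(d-1)$-simplex (\ie, $\XX \sim$ Dirichlet$(1, \ldots, 1)$). Its marginals are Beta$(1, d-1)$, hence continuous, so $F$ is continuous by \refR{R:continuous}(a). Any coordinatewise inequality $\XX^{(i)} \leq \XX^{(j)}$ combined with equality of coordinate sums ($= 1$) forces $\XX^{(i)} = \XX^{(j)}$, an event of probability zero; hence almost surely no observation is ever dominated by another, and $p_n = 1$. The argument presents no real obstacle; the only point requiring care is continuity of~$F$ in the two examples, and this reduces in each case to the univariate marginal criterion of \refR{R:continuous}(a).
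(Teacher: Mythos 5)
Your proof is correct, and the two endpoint examples ($\XX = (Y,\ldots,Y)$ for $p_n = n^{-1}$, and a Dirichlet vector with $\|\XX\|_1 = 1$ for $p_n = 1$) are exactly the ones the paper uses, with the same justifications. Where you genuinely diverge is the lower bound $p_n \geq n^{-1}$. The paper's argument is a one-line projection: if $X^{(n)}_1$ sets a univariate record (probability $n^{-1}$), then $\XX^{(n)}$ sets a $d$-dimensional record; this has the added benefit of making the $(Y,\ldots,Y)$ equality case immediate, since there the sufficient condition becomes necessary as well. Your argument instead goes through the identity $\E r_n = n\,p_n$ (already noted in the paper after \refD{D:record}) together with the observation that a nonempty finite poset has a maximal element, so $r_n \geq 1$. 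That route is also valid and arguably more structural -- it explains the bound as ``at least one maximum must survive'' -- but note one small point of care: with the paper's definition of remaining record via non-strict domination $\leq$, a maximal element of the observed set is only guaranteed to be a remaining record when the observations are distinct, so ``$r_n \geq 1$ always'' should really be ``$r_n \geq 1$ almost surely,'' which holds here because continuity of~$F$ rules out ties [\refR{R:continuous}(d)] and is all that is needed to conclude $\E r_n \geq 1$. With that caveat supplied, both approaches are complete; the paper's is shorter, yours generalizes more readily to statements about $\E r_n$.
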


\begin{proof}
If $X^{(n)}_1$ sets a one-dimensional record (which has probability $n^{-1}$), then $\XX^{(n)}$ sets a 
$d$-dimensional record.  Thus $p_n \geq n^{-1}$, and equality holds if~$Y$ has any continuous distribution on~$\bbR$ 
and $\XX = (Y, \ldots, Y)$.

At the other extreme, if $d \geq 2$ and (for example) $\XX \geq {\bf 0}$ has any continuous distribution (such as any Dirichlet distribution) satisfying $\|\XX\|_1 = 1$, then $\XX^{(1)}, \XX^{(2)}, \ldots$ form an antichain in the partial order $\leq$ on $\bbR^d$, so $p_n = 1$.   
\end{proof}

For further general information about $p_n$ (in addition to Theorems~\ref{T:rangeNRSPD}--\ref{T:rangePRSPD}, of course), see \refR{R:general}.

\section{Independent coordinates: $p_n^*$} \label{S:indep}

This brief section concerns the case where the coordinates of each observation are independent.  As noted in \refR{R:manyto1}, $p_n$ doesn't otherwise depend on~$F$ in this setting, so we may as well assume that the coordinates are \iid\ Exponential$(1)$.  Then (writing $p^*_n$ for~$p_n$ in this special case)
\begin{align*}
p^*_n 
&= \int\!\P(\XX \in \ddx \xx) [1 - \P(\XX \geq \xx)]^{n - 1} 
= \int_{{\bf 0} \leq \xx \in \bbR^d}\!e^{- \|\xx\|_1} \left( 1 - e^{- \|\xx\|_1} \right)^{n - 1} \dd \xx \\
&= \int_0^{\infty}\!\frac{y^{d - 1}}{(d - 1)!} e^{-y} (1 - e^{-y})^{n - 1} \dd y \\
&= n^{-1} \sum_{j = 1}^n (-1)^{j - 1} \binom{n}{j} j^{- (d - 1)} 
=: n^{-1} \hH^{(d - 1)}_n ( = n^{-1}\mbox{\ when $d = 1$}).
\end{align*}
Alternatively, as pointed out by a reviewer of a previous draft, the same expression can be obtained for $p^*_n$ by applying the principle of inclusion--exclusion to
$\P\left( \bigcup_{i = 1}^{n - 1} \{\XX^{(n)} \leq \XX^{(i)}\} \right)$.

The numbers
\[ 
\hH^{(k)}_n = \sum_{j = 1}^n (-1)^{j - 1} \binom{n}{j} j^{- k} 
\] 
appearing in the expression for $p^*_n$ 
are called \emph{Roman harmonic numbers}, studied in~\cite{Loeb(1989)}, \cite{Roman(1992)}, and~\cite{Sesma(2017)}.
This $\hH^{(k)}_n$ can be written as a positive linear combination of products of generalized harmonic numbers $H^{(r)}_n := \sum_{j = 1}^n j^{-r}$.  In particular, $\hH^{(1)}_n = H^{(1)}_n$.

\begin{remark}
\label{R:concomitants}

(a)~In obvious notation, the numbers $\E r^*_{n, d} = n p^*_{n, d} = \E R^*_{n, d - 1}$ increase strictly in~$n$ for fixed $d \geq 2$, with limit~$\infty$ as $n \to \infty$.  
(Note:\ The equality in distribution of the random variables $r_{n, d}$ and $R_{n, d - 1}$ for general continuous~$F$ follows by standard consideration of concomitants:\ Consider $\XX^{(1)}, \ldots, \XX^{(n)}$ sorted according to the value of the $d^{\rm \scriptsize th}$ coordinate.)
Further, the numbers $p^*_{n, d}$ increase strictly in~$d$ for fixed $n \geq 2$, with limit~$1$ as $d \to \infty$; and decrease strictly in~$n$ for fixed $d \geq 1$, with limit~$0$ as $n \to \infty$.

(b)~For fixed~$d$ we have 
\[
p^*_{n, d} \sim n^{-1} \frac{(\ln n)^{d - 1}}{(d - 1)!}\mbox{\ as $n \to \infty$}.
\] 
Bai et al.~\cite{Bai(2005)} give a more extensive asymptotic expansion. 
\end{remark}

\section{Negative dependence (including NRPD) and \\ positive dependence (including PRPD)} \label{S:NandP}

In this section we review existing notions of negative and positive dependence in Subsections~\ref{S:N}--\ref{S:P} and relate our new notions of NRPD and PRPD to them in Subsection~\ref{S:relate}. 

\subsection{Negative dependence} \label{S:N}

For a discussion of several notions of negative dependence, see~\cite{Joag-Dev(1983)}.  The first two notions in the next definition can be found there, with focus on the first notion (NA); we have created the third by interpolating between the first two.

\begin{definition}
\label{D:negdep}
(a)~Random variables $X_1, \ldots, X_k$ are said to be \emph{negatively associated} (NA) if for every pair of disjoint subsets $A_1$ and $A_2$ of $\{1, \ldots, k\}$ we have
\[
\Cov\{f_1(X_i:i \in A_1),\,f_2(X_j:j \in A_2)\} \leq 0
\]
whenever $f_1$ and $f_2$ are nondecreasing (in each argument) and the covariance is defined.

(b)~Random variables $X_1, \ldots, X_k$ are said to be \emph{negatively upper orthant dependent} (NUOD) if for all real numbers $x_1, \ldots, x_k$ we have
\[
\P(X_i > x_i,\ i = 1, \ldots, k) \leq \prod_{i = 1}^k \P(X_i > x_i).
\]

(c)~We say that random variables $X_1, \ldots, X_k$ are \emph{negatively upper orthant associated} (NUOA) if for every pair of disjoint subsets $A_1$ and $A_2$ of $\{1, \ldots, k\}$ and all real numbers $x_1, \ldots, x_k$ we have
\[
\P(X_i > x_i,\ i = 1, \ldots, k) \leq \P(X_i > x_i,\ i \in A_1)\,\P(X_j > x_j,\ j \in A_2).
\]  
\end{definition}

\begin{remark}
\label{R:ND}
(a)~NA implies NUOA, which implies NUOD.

(b)~Theorem 2.8 in~\cite{Joag-Dev(1983)} gives a way of constructing NA $(X_1, \ldots, X_k)$, namely, if 
$\gG_1, \ldots, \gG_k$ are independent random variables with log-concave densities, then the conditional distribution of $\gGG = (\gG_1, \ldots, \gG_k)$ given $\sum_{j = 1}^k \gG_j$ is NA almost surely.
\end{remark}

\subsection{Positive dependence} \label{S:P}

For a general discussion of various notions of positive dependence focusing on the one in the next definition, see~\cite{Esary(1967)}. 

\begin{definition}
Random variables ${\bf X} = (X_1, \ldots, X_k)$ are said to be \emph{positively associated} (PA) (or simply \emph{associated}) if
\[
\Cov\{f_1({\bf X}),\,f_2({\bf X})\} \geq 0
\]
whenever $f_1$ and $f_2$ are nondecreasing (in each argument) and the covariance is defined.
\end{definition}

\begin{remark}
\label{R:PD}
It is easy to show that if $Z$ and $\gG_1, \ldots, \gG_d$ are independent positive random variables, then
the scale mixture
\[
\XX := (Z \gG_1, \ldots, Z \gG_d)  
\]
is PA.  The proof uses the law of total covariance (conditioning on~$Z$), the fact 
(\cite[Theorem~2.1]{Esary(1967)}) that independent random variables are PA (applied to the conditional covariance), and the fact (\cite[Property P$_3$]{Esary(1967)}, due to Chebyshev) that the set consisting of a single random variable is PA (applied to the covariance of the conditional expectations).
\end{remark}

\subsection{Relation with NRPD and PRPD} \label{S:relate}

Our motivation for regarding NRPD and PRPD of respective Definitions~\ref{D:NRSPD} and~\ref{D:PRSPD} as notions of negative and positive dependence, respectively, is the following observation.
One might suspect that NA implies NRPD and that PA implies PRPD; we are unable to prove 
either implication, but we can prove the weaker results (recall \refR{R:Fp}) that NA implies $p_2 \geq p^*_2$ and PA implies $p_2 \leq p^*_2$. 

To establish the claimed inequalities, in the following proof replace $*$ by $\leq$ if the observations are PA, by $=$ if they have independent coordinates, and by $\geq$ if they are NA.  The claim is that $p_2 * 1 - 2^{-d}$.  To see this, recall~\eqref{pformula}.  We then have
\begin{align*}
p_2
&= \int \P(\XX \in \ddx \xx) [1 - \P(\XX \geq \xx)] = 1 - \int \P(\XX \in \ddx \xx) \P(\XX \geq \xx) \\
&* 1 - \int \P(\XX \in \ddx \xx) \prod_{j = 1}^d \P(X_j \geq x_j) \\
&* 1 - \int \prod_{j = 1}^d [\P(X_j \in \ddx x_j) \P(X_j \geq x_j)] \\
&= 1 - \prod_{j = 1}^d \int [\P(X_j \in \ddx x) \P(X_j \geq x)] \\
&= 1 - \prod_{j = 1}^d \int [\P(X_j \in \ddx x) \P(X^*_j \geq x | X_j = x))] \\
&{} \qquad \qquad \mbox{where $X^*_j$ is an independent copy of $X_j$} \\
&= 1 - \prod_{j = 1}^d \int [\P(X_j \in \ddx x) \P(X^*_j \geq X_j | X_j = x))] \\
&= 1 - \prod_{j = 1}^d \P(X^*_j \geq X_j) = 1 - \prod_{j = 1}^d 2^{-1} = 1 - 2^{-d}.
\end{align*}


\section{Marginalized-Dirichlet distributions $F_a$:\\ strict decreasing monotonicity in the \RSP\ ordering} \label{S:Dirmon}

The following is the usual definition of Dirichlet distribution, where the normalizing constant is the multivariate 
beta-function value $\Beta(\bb) := \frac{\prod_{j = 1}^k \Gamma(b_j)}{\Gamma(\|\bb\|_1)}$.

\begin{definition}
\label{D:Dirichlet}
Let $k \geq 2$ and $\bb = (b_1, \ldots, b_k) \succ {\bf 0}$.
If $\YY = (Y_1, \ldots, Y_k) \succ {\bf 0}$ satisfies $\|\YY\|_1 = 1$ and $(Y_1, \ldots, Y_{k - 1})$ has $(k - 1)$-dimensional density (with respect to Lebesgue measure)
\[
\frac{1}{\Beta(\bb)} y_1^{b_1 - 1} \cdots y_k^{b_k - 1}\,{\bf 1}(y_1 > 0, \ldots, y_k > 0)
\]
with $y_k := 1 - \sum_{j = 1}^{k - 1} y_j$,
then we say that~$\YY$ has the \emph{Dirichlet$(\bb)$ distribution}.
\end{definition}

We will have special interest in taking $\XX = (X_1, \ldots, X_d)$ to be the first~$d$ coordinates of 
$(Y_1, \ldots, Y_{d + 1}) \sim \mbox{Dirichlet$(1, \ldots, 1, a)$}$; we denote the distribution of~$\XX$ in this case by $\mbox{Dir}_a$ and the corresponding distribution function by $F_a$; we refer to the distributions 
$\mbox{Dir}_a$ as \emph{marginalized-Dirichlet distributions}.
We will have occasional interest in taking
\begin{equation}
\label{Dir1}
\XX = (X_1, \ldots, X_d) \sim \mbox{Dirichlet$(1, \ldots, 1) =:$\ Dir$({\bf 1})$}.
\end{equation}

\begin{remark}
(a)~When $a = 1$, the vector~$\XX$ is uniformly distributed in the (open) $d$-dimensional unit simplex
\begin{equation}
\label{simplex}
{\mathcal S}_d := \{ \xx = (x_1, \ldots, x_d):\,x_j > 0\mbox{\ for $j = 1, \ldots, d$ and $\|\xx\|_1 < 1$} \}.  
\end{equation}
This special case is the focus of~\cite{Hwang(2010)}.

(b)~We find explicit computation (exact or asymptotic) of $p_n$ intractable for general Dirichlet distributions.
\end{remark}

Dirichlet distributions exhibit negative dependence among the coordinates according to standard notions~\cite{Joag-Dev(1983)}:

\begin{remark}
\label{R:DirND}
(a)~The distribution $F_a$ is NUOA [recall \refD{D:negdep}(c)] for every $a \in (0, \infty)$, by a simple calculation.

(b)~The distribution $F_a$ is NA if $a \geq 1$.
Indeed, as in~\refD{D:Dirichlet}, let $\bb = (b_1, \ldots, b_k) \succ {\bf 0}$.  The proof [recall \refR{R:ND}(b)] that Dirichlet$(\bb)$ is NA when $b_j \geq 1$ for every~$j$ relies on the following two standard facts:
\begin{enumerate}
\item[(i)]~If $\gG_j \sim \mbox{Gamma$(b_j)$}$ are independent random variables ($j = 1, \ldots, k$), then 
$\|\gGG\|_1 \sim \mbox{Gamma$(\|\bb\|_1)$}$ and
\[
\YY := \left( \frac{\gG_1}{\|\gGG\|_1}, \ldots, \frac{\gG_k}{\|\gGG\|_1} \right) \sim \mbox{Dirichlet$(\bb)$}
\]
are independent.
\item[(ii)]~For any $b \geq 1$, the Gamma$(b)$ density is log-concave.
\end{enumerate}
\end{remark}

Consider $F = F_a$.  The cases $n = 1$ (with $p_n \equiv 1$) and $d = 1$ (where the choice of~$\ga$ is irrelevant) being trivial, in the following monotonicity result we consider only $n \geq 2$ and $d \geq 2$.

\begin{proposition}
\label{P:mono1}
Fix $d \geq 2$ and $n \geq 2$, and let $F = F_a$, \ie,\ $\XX \sim \mbox{\rm Dir}_a$.  Then~$F_a$ is strictly decreasing in the \RSP\ ordering and therefore the probability $p_n(a) := p_n(F_a)$ that $\XX^{(n)}$ sets a record is strictly decreasing in~$\ga$.
\end{proposition}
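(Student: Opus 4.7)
The plan is to compute $H_a(-\XX)$ in closed form, reducing the \RSP-ordering statement to a one-dimensional stochastic comparison, and then to establish that comparison via likelihood ratio (LR) ordering.

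Since $\XX\sim\mathrm{Dir}_a$ has density proportional to $(1-\|\yy\|_1)^{a-1}$ on the open simplex $\mathcal{S}_d$, the affine change of variables $\yy=\xx+(1-\|\xx\|_1)\ww$, which maps $\mathcal{S}_d$ bijectively to itself as $\ww$ ranges over $\mathcal{S}_d$, combined with the fact that the $\mathrm{Dir}_a$ density integrates to $1$, yields
\begin{equation*}
\P(\XX\ge\xx)=(1-\|\xx\|_1)^{d+a-1}, \qquad \xx\in\mathcal{S}_d.
\end{equation*}
Combined with the standard Dirichlet aggregation fact $\|\XX\|_1\sim\mathrm{Beta}(d,a)$, this gives $H_a(-\XX)=Z_{a,d}^{d+a-1}$, where $Z_{a,d}:=1-\|\XX\|_1\sim\mathrm{Beta}(a,d)$.

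In view of \refD{D:po} and \refR{R:Fp}, it then suffices to show that $W_a:=Z_{a,d}^{d+a-1}$ is strictly stochastically increasing in $a$, since $w\mapsto(1-w)^{n-1}$ is strictly decreasing on $(0,1)$ for $n\ge 2$, and so $p_n(a)=\E(1-W_a)^{n-1}$ will then be strictly decreasing in $a$ as required. I would in fact establish the stronger LR-increasing property. A routine change of variables (with $\alpha_a:=a+d-1$) reduces the induced $\mathrm{Beta}(a,d)$-based density to the notably simple form
\begin{equation*}
g_a(w) = \frac{1}{\alpha_a\,B(a,d)}\bigl(w^{-1/\alpha_a}-1\bigr)^{d-1}, \qquad w\in(0,1),
\end{equation*}
so for $0<a<a'$ the LR condition amounts to showing that $\phi(w):=(w^{-1/\alpha_{a'}}-1)/(w^{-1/\alpha_a}-1)$ is strictly increasing on $(0,1)$.

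The substitution $u:=-\log w$ converts this to the equivalent claim that $\tilde h(u):=(e^{u/\alpha_{a'}}-1)/(e^{u/\alpha_a}-1)$ is strictly decreasing on $(0,\infty)$. Logarithmic differentiation yields
\begin{equation*}
\frac{\tilde h'(u)}{\tilde h(u)} = \frac{1/\alpha_{a'}}{1-e^{-u/\alpha_{a'}}} - \frac{1/\alpha_a}{1-e^{-u/\alpha_a}},
\end{equation*}
so it is enough to show that for each fixed $u>0$ the map $\alpha\mapsto(1/\alpha)/(1-e^{-u/\alpha})$ is strictly decreasing on $(0,\infty)$. Writing $x:=u/\alpha$ reduces this to the elementary fact that $x\mapsto x/(1-e^{-x})$ is strictly increasing on $(0,\infty)$, which follows by noting that the numerator of its derivative, $1-(1+x)e^{-x}$, vanishes at $x=0$ and has strictly positive derivative $xe^{-x}$ for $x>0$.

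The main obstacle is recognizing the notably simple formula for $g_a$:\ without the collapse to a single factor $(w^{-1/\alpha_a}-1)^{d-1}$, an LR comparison of the two-factor expression $w^{a/\alpha_a-1}(1-w^{1/\alpha_a})^{d-1}$ pits two $w$-dependent factors of opposite monotonicity against each other and appears inconclusive by naive differentiation. Once the density is in the clean form, the remaining steps become a routine one-variable calculus exercise.
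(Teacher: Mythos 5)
Your proof is correct and follows essentially the same route as the paper: the closed form $\P(\XX\ge\xx)=(1-\|\xx\|_1)^{d+a-1}$, the reduction to $H_a(-\XX)=Z_{a,d}^{d+a-1}$ with $Z_{a,d}\sim\mathrm{Beta}(a,d)$, and the same one-factor density $g_a(w)\propto\bigl(w^{-1/\alpha_a}-1\bigr)^{d-1}$ driving a likelihood-ratio comparison. The only (immaterial) difference is the final calculus step, where you substitute $u=-\log w$ and invoke monotonicity of $x/(1-e^{-x})$ while the paper substitutes $v=w^{-1/(d+\gb-1)}$ and analyzes $(v-1)/(v^t-1)$ directly; your explicit attention to strictness of the resulting decrease of $p_n(a)$ is a small plus.
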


\begin{proof}
By successive integrations one finds
\[
\P(\XX \geq \xx) = (1 - \|\xx\|_1)^{d + a - 1};
\]
thus $H_a(-\XX) = (1 - \|\XX\|_1)^{d + a - 1}$.  Further, $1 - \|\XX\|_1 \sim \mbox{Beta$(a, d)$}$, so the first assertion is an immediate consequence of \refL{L:Dirmon} below, and the second assertion follows from \refR{R:Fp}.
\end{proof}

Before proceeding to \refL{L:Dirmon}, we remind the reader of the definition of the likelihood ratio partial ordering (specialized to our setting of random variables taking values in the unit interval) and its connection to the well-known stochastic ordering.

\begin{definition}
Given two real-valued random variables~$S$ and~$T$ with respective everywhere strictly positive densities~$f$ and~$g$ with respect to Lebesgue measure on $(0, 1)$, we say that \emph{$S \leq T$ in the likelihood ratio (LR) ordering} if $g(u) / f(u)$ is nondecreasing in $u \in (0, 1)$.
\end{definition}

As noted (for example) in \cite[Section 9.4]{Ross(1996)}, if $S \leq T$ in the LR ordering, then $S \leq T$ stochastically. 

\begin{lemma}
\label{L:Dirmon}
Fix 
a real number $d > 1$, and let $Z_{\ga}$ have the {\rm Beta}$(\ga, d)$ distribution.  Then 
$W_{\ga} := Z_{\ga}^{d + \ga - 1}$ is strictly increasing in the likelihood ratio ordering, and therefore also in the stochastic ordering, as $\ga \in (0, \infty)$ increases. 
\end{lemma}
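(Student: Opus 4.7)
The plan is to establish strict LR ordering by a direct computation with the density of $W_\ga$, reducing at the end to Bernoulli's inequality. Write $\alpha \equiv \alpha(\ga) := d + \ga - 1$, which is positive since $d > 1$ and $\ga > 0$. A change-of-variables applied to the ${\rm Beta}(\ga, d)$ density of $Z_\ga$, with $w = z^\alpha$, shows that $W_\ga = Z_\ga^\alpha$ has density on $(0,1)$ proportional to $w^{-(d-1)/\alpha}(1 - w^{1/\alpha})^{d-1}$. Fixing $\ga < \ga'$ and setting $\alpha' := \alpha(\ga') > \alpha$, the desired strict monotone increase of $W_\ga$ in the LR ordering is equivalent to strict monotone increase in $w \in (0,1)$ of $L(w) := \log[f_W(w;\ga')/f_W(w;\ga)]$, i.e., to strict positivity of $L'(w)$.

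Computing $L'(w)$ and clearing the positive factor $w/(d-1)$, the condition to verify takes the form
\[
(s - t) + \frac{s\,w^s}{1 - w^s} - \frac{t\,w^t}{1 - w^t} > 0
\]
on $w \in (0, 1)$, with $s := 1/\alpha > t := 1/\alpha' > 0$. The next step is to substitute $u := w^s \in (0, 1)$ and $r := t/s \in (0, 1)$ (so that $w^t = u^r$) and to apply the elementary identity $x/(1-x) = 1/(1-x) - 1$ to each of the two rational terms. After dividing through by the positive factor $s$, the constants $(1-r)$, $-1$, and $+r$ that arise cancel exactly, and the entire expression collapses to
\[
\frac{1}{1-u} - \frac{r}{1-u^r}.
\]
Positivity of this quantity is equivalent to $u^r < 1 - r + r u$.

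That last inequality is Bernoulli's inequality with base $u \in (0, 1)$ and exponent $r \in (0, 1)$, and it is strict for $u \ne 1$. This establishes strict LR ordering of $W_\ga$ in $\ga$; strict stochastic ordering then follows from the remark immediately preceding the lemma. The one real obstacle is the algebraic bookkeeping: the log-derivative of the density ratio produces four parameter-dependent pieces that initially look unrelated, and the combination of the substitutions $u = w^s$, $r = t/s$ with the identity $x/(1-x) = 1/(1-x) - 1$ is what forces the cancellation into the standard Bernoulli form. Once that reduction is in hand, no further ideas are needed.
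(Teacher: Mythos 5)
Your proof is correct and takes essentially the same route as the paper's: compute the density of $W_{\ga}$, pass to the logarithm of the likelihood ratio, and verify strict monotonicity by differentiation in one variable. The paper's final step---showing $h(v)=v^t-1-tv^{t-1}(v-1)<0$ for $v>1$, $t>1$---is the same Bernoulli-type inequality you invoke (substitute $u=v^{-1}$), merely established by a further differentiation in the exponent regime $t>1$ rather than quoted for $r\in(0,1)$.
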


\begin{proof}
By elementary calculation, $W_{\ga}$ has density $g_{\ga}$ on $(0, 1)$ given by the following expression, with $c_{\ga} := (d + \ga - 1) \Beta(\ga, d)$:
\begin{align*}
g_{\ga}(w) 
&= c_{\ga}^{-1} w^{- (d - 1) / (d + \ga - 1)} \left( 1 - w^{1 / (d + \ga - 1)} \right)^{d - 1} \\ 
&= c_{\ga}^{-1} \left( w^{- 1 / (d + \ga - 1)} - 1 \right)^{d - 1}.
\end{align*}
Letting $0 < \ga < \gb < \infty$ and setting $v := w^{- 1 / (d + \gb - 1)}$ and 
\[t := (d + \gb - 1) / (d + \ga - 1),\]
it then suffices to show for any fixed $t > 1$ that the ratio $(v - 1) / (v^t - 1)$ decreases strictly as~$v$ increases over $(1, \infty)$.

For this, we consider the log-ratio, whose derivative is $h(v) / [(v - 1) (v^t - 1)]$, where
\begin{equation}
\label{hvdef}
h(v) := v^t - 1 - t v^{t - 1} (v - 1);
\end{equation}
so we need only show that $h(v) < 0$ for $v \in (1, \infty)$.  Indeed, since
\[
h'(v) = - t (t - 1) v^{t - 2} (v - 1) < 0
\]
for $v \in (1, \infty)$, we see that $h(v) < h(1) = 0$ for $v \in (1, \infty)$.
\end{proof}   

\section{Positively associated $\hF_a$:\\ strict increasing monotonicity in the \RSP\ ordering} \label{S:PAmon}

Distributions on $\bbR^d$ with positively associated coordinates 
can be constructed in similar fashion to the marginalized-Dirichlet distributions $F_a$ [recall Remarks \ref{R:PD} and \ref{R:DirND}(b)].
Given $a > 0$, let $\hF_a$ denote the PA distribution of
\[
\XX = \left( \frac{\gG_1}{\gG}, \ldots, \frac{\gG_d}{\gG} \right)
\mbox{\ (scale mixture of \iid\ Exponentials)},
\]
where the random variables $\gG, \gG_1, \ldots, \gG_d$ are independent, $\gG \sim \mbox{Gamma$(a)$}$,
and $\gG_j \sim \mbox{Exponential}(1) \equiv \mbox{Gamma$(1)$}$ for $j = 1, \ldots, d$.

\begin{remark}
(a)~Scale mixtures of a finite number of \iid\ Exponential random variables appear in a study of finite versions of 
de Finetti's theorem \cite[(3.11)]{Diaconis(1987)}. 

(b)~We find explicit computation (exact or asymptotic) of $p_n$ intractable for general scale mixtures, let alone for general PA distributions.
\end{remark} 

Similarly to \refP{P:mono1}, in our positive-association example we have the following claim:

\begin{proposition}
\label{P:mono2}
Fix $d \geq 2$ and $n \geq 2$, and let $F = \hF_a$.  Then~$\hF_a$ is strictly increasing in the \RSP\ ordering and therefore the probability $\hp_n(a) := p_n(\hF_a)$ that $\XX^{(n)}$ sets a record is strictly increasing 
in~$\ga$.
\end{proposition}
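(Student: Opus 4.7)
The plan is to mirror Proposition~\ref{P:mono1} step for step. First I compute $\hH_a(-\XX)$, where $\hH_a$ is the distribution function of $-\XX$ when $\XX \sim \hF_a$. Conditioning on $\gG$ and using that the $\gG_j$ are independent Exponential$(1)$, for $\xx \geq {\bf 0}$ the Gamma$(a)$ moment generating function gives
\[
\P(\XX \geq \xx) = \E\,e^{-\|\xx\|_1 \gG} = (1 + \|\xx\|_1)^{-a}.
\]
Writing $\|\XX\|_1 = U/\gG$ with $U := \gG_1 + \cdots + \gG_d \sim \mbox{Gamma}(d)$ independent of $\gG \sim \mbox{Gamma}(a)$, the standard beta--gamma identity gives $Z_a := \gG/(\gG + U) \sim \mbox{Beta}(a, d)$, so that $1 + \|\XX\|_1 = 1/Z_a$ and hence
\[
\hH_a(-\XX) = Z_a^a,
\]
matching the formula displayed in \refF{F:oneslide}.

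By \refR{R:Fp} and \refD{D:po}, the proposition then reduces to showing that $W_a := Z_a^a$ is strictly decreasing in the stochastic ordering as $a$ increases. Following the template of \refL{L:Dirmon}, I would establish the stronger claim that $W_a$ is strictly decreasing in the likelihood ratio ordering. A change of variables yields the density
\[
g_a(w) = \frac{1}{a\,\Beta(a, d)}\,(1 - w^{1/a})^{d - 1}, \qquad w \in (0, 1),
\]
so for $0 < a < b$ the ratio $g_b(w)/g_a(w)$ equals a positive constant times $[(1 - w^{1/b})/(1 - w^{1/a})]^{d - 1}$. Substituting $u := w^{1/a}$ and $s := a/b \in (0, 1)$ reduces the monotonicity claim to showing that $\phi(u) := (1 - u^s)/(1 - u)$ is strictly decreasing on $(0, 1)$ for $s \in (0, 1)$.

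That last single-variable monotonicity is the main technical step, though, as in \refL{L:Dirmon}, I do not expect it to be hard: differentiating $\phi$ reduces matters to the sign of $h(u) := 1 - s u^{s - 1} + (s - 1) u^s$; since $h(1) = 0$ and $h'(u) = s(s - 1) u^{s - 2}(u - 1) > 0$ on $(0, 1)$, one concludes $h < 0$ and hence $\phi' < 0$ there. Strict likelihood ratio ordering yields strict stochastic ordering, hence the strict \RSP\ ordering for $\hF_a$; and because $t \mapsto (1 - t)^{n - 1}$ is strictly decreasing on $[0, 1]$ for $n \geq 2$, formula~\eqref{pformula} then forces $\hp_n(a)$ to be strictly increasing in~$a$.
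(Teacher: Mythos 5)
Your proposal is correct and follows essentially the same route as the paper: compute $\P(\XX \geq \xx) = (1+\|\xx\|_1)^{-a}$, identify $\hH_a(-\XX)$ as $Z_a^a$ with $Z_a \sim \mbox{Beta}(a,d)$, and prove strict decrease in the likelihood ratio ordering from the same density $\frac{1}{a\Beta(a,d)}(1-w^{1/a})^{d-1}$ via the same single-variable calculus (your $h$ is just the negative of the paper's $h$ in~\eqref{hvdef}, since you differentiate the ratio rather than its logarithm). No gaps.
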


\begin{proof}
A simple computation for $\xx \geq {\bf 0}$ gives
\[
\P(\XX \geq \xx) = (1 + \|\xx\|_1)^{-a}
\]
and thus $H_a(-\XX) = (1 + \|\XX\|_1)^{-a}$.  Further, $(1 + \|\XX\|_1)^{-1} = \gG / (\gG + \|\gGG\|_1) \sim \mbox{Beta$(a, d)$}$, so the first assertion is an immediate consequence of the following lemma, and the second assertion follows from \refR{R:Fp}.
\end{proof}

\begin{lemma}
Fix 
a real number $d > 1$, and let $Z_{\ga}$ have the {\rm Beta}$(\ga, d)$ distribution.  Then 
$\hW_{\ga} := Z_{\ga}^{\ga}$ is strictly decreasing in the likelihood ratio ordering, and therefore also in the stochastic ordering, as $\ga \in (0, \infty)$ increases. 
\end{lemma}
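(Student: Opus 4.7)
The plan is to mirror the proof of \refL{L:Dirmon}. First I would compute the density of $\hW_\ga = Z_\ga^\ga$ on $(0,1)$ by change of variable. A pleasant cancellation occurs: the Jacobian $(1/\ga) w^{1/\ga - 1}$ annihilates the factor $z^{\ga - 1}$ in the Beta density, leaving
\[
g_\ga(w) \;=\; \frac{1}{\ga\,\Beta(\ga, d)}\,\bigl(1 - w^{1/\ga}\bigr)^{d-1}, \qquad w \in (0,1).
\]

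Next, for $0 < \ga < \gb$, the ratio $g_\gb(w)/g_\ga(w)$ equals a positive constant times $\bigl[(1 - w^{1/\gb})/(1 - w^{1/\ga})\bigr]^{d-1}$. Since $d > 1$, showing strict decrease of $\hW_\ga$ in the LR ordering as $\ga$ increases reduces to showing that $(1 - w^{1/\gb})/(1 - w^{1/\ga})$ is strictly decreasing in $w \in (0,1)$. Substituting $u := w^{1/\gb}$ and $s := \gb/\ga > 1$ (noting that $w \mapsto u$ is a strictly increasing bijection of $(0,1)$ onto itself), this becomes the purely real-variable claim that for every $s > 1$ the function $(1 - u)/(1 - u^s)$ is strictly decreasing on $(0,1)$.

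To settle this claim, I would differentiate the logarithm; after combining fractions, the sign of the derivative equals that of
\[
h(u) \;:=\; -1 + s u^{s-1} - (s-1) u^s.
\]
One computes $h(1) = 0$ and $h'(u) = s(s-1) u^{s-2}(1 - u) > 0$ for $u \in (0,1)$, so $h$ is strictly increasing on $(0,1)$ and therefore $h(u) < h(1) = 0$ on that interval, as required. The second conclusion (strict decrease in the stochastic ordering) then follows from the standard fact, already used in the preceding proof, that LR dominance implies stochastic dominance.

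The only real obstacle, entirely parallel to \refL{L:Dirmon}, is finding the right single-variable reduction; once the substitution $u = w^{1/\gb}$, $s = \gb/\ga$ is made the remaining verification is a short calculus exercise, essentially a mirror image (on $(0,1)$ rather than $(1,\infty)$) of the analysis of the auxiliary function $h$ appearing in the proof of \refL{L:Dirmon}.
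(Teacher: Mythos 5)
Your proof is correct and follows essentially the same route as the paper: the same density computation, a reduction to the monotonicity of a two-power ratio via a power substitution, and the same auxiliary function $h$ analyzed through $h(1)=0$ and the sign of $h'$. The only difference is the cosmetic reparametrization $u = w^{1/\gb}$, $s = \gb/\ga > 1$ in place of the paper's $v = w^{1/\ga}$, $t = \ga/\gb \in (0,1)$, which flips the ratio and the signs of $h$ and $h'$ but leaves the argument unchanged.
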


\begin{proof}
By elementary calculation, $\hW_{\ga}$ has density $\hg_{\ga}$ on $(0, 1)$ given by the following expression, with $c_{\ga} := \ga \Beta(\ga, d)$:
\[
\hg_{\ga}(w) 
= c_{\ga}^{-1} \left( 1 - w^{1 / \ga} \right)^{d - 1}.
\]
Letting $0 < \ga < \gb < \infty$ and setting $v := w^{1 / \ga}$ and $t := \ga / \gb$, 
it then suffices to show for any fixed $t \in (0, 1)$ that the ratio $(1 - v^t) / (1 - v)$ decreases strictly as~$v$ increases over $(0, 1)$.

For this, we consider the log-ratio, whose derivative is 
\[
- h(v) / [(1 - v) (1 - v^t)],
\]
where we again use the definition~\eqref{hvdef}, but now for $v \in (0, 1]$ [and with $t \in (0, 1)$];
so we need only show that $h(v) > 0$ for $v \in (0, 1)$.  Indeed, since
\[
h'(v) = - t (1 - t) v^{t - 2} (1 - v) < 0
\]
for $v \in (0, 1)$, we see that $h(v) > h(1) = 0$ for $v \in (0, 1)$.
\end{proof}

\section{Proofs of Theorems~\ref{T:rangeNRSPD}--\ref{T:rangePRSPD} and \refC{C:range}} 
\label{S:proofmain}

We are now prepared to prove Theorems~\ref{T:rangeNRSPD}--\ref{T:rangePRSPD} and \refC{C:range} according to the outline provided at the end of \refS{S:RSP}; see \refF{F:oneslide}.

\begin{proof}[Proof of \refT{T:rangeNRSPD}]
In light of \refL{L:extremes}, it suffices to show that the image of $p_n$ on the domain of our marginalized-Dirichlet examples $F_a$ is $(p^*_n, 1)$.

We can regard $p_n \equiv p_n(a)$ as a function on the domain $(0, \infty)$ corresponding to our Dirichlet 
index~$\ga$.  Since the density $f_a(x)$ corresponding to $F_a$ at each fixed argument~$x$ is a continuous function of~$\ga$, it follows from Scheff\'{e}'s theorem (\eg,\ \cite[Thm.~16.12]{BillingsleyPM(2012)}) that the corresponding distribution functions $F_a$ are continuous in~$a$ in the topology of weak convergence.  It then follows from 
Propositions~\ref{P:p is continuous} and~\ref{P:mono1} that the image in question is $(p_n(\infty-), p_n(0+))$.

But, as $\ga \to \infty$, it is easy to see that the density of $\ga$ times an observation converges pointwise to the density for independent Exponentials.  By Scheff\'{e}'s theorem and \refP{P:p is continuous}, therefore, $p_n(\infty-) = p^*_n$.

To compute $p_n(0+)$, we first observe that the distribution of an observation $\XX(\ga)$ from $F_a$ is that of
\[
\left( \frac{Y_1}{\|\YY\|_1 + \gG_{\ga}}, \ldots, \frac{Y_d}{\|\YY\|_1 + \gG_{\ga}} \right),
\]
where $Y_1, \dots, Y_d$ are standard Exponential random variables, $\gG_{\ga}$ is distributed 
(unit-scale) Gamma$(\ga)$, and all $d + 1$ random variables are independent.  It follows easily that 
$\XX(\ga)$ converges in distribution to the distribution Dir$({\bf 1})$ mentioned at~\eqref{Dir1}
(for which $p_n = 1$, as mentioned in the proof of \refL{L:extremes})
as $\ga \to 0$.  Thus, by \refP{P:p is continuous}, $p_n(0+) = 1$.   
\end{proof}

\begin{proof}[Proof of \refT{T:rangePRSPD}]
In light of \refL{L:extremes}, it suffices to show that the image on the domain of our PA examples $\hF_a$ is 
$(n^{-1}, p^*_n)$.

In this case we
can regard $p_n \equiv \hp_n(a)$ as a function on the domain $(0, \infty)$ corresponding to our Gamma index parameter~$\ga$.  The value of the density of an observation at a given point $\xx \geq {\bf 0}$ in 
$\bbR^d$ is
\[
\frac{\Gamma(d + \ga)}{\Gamma(\ga)} (1 + \|\xx\|_1)^{- (d + \ga)},
\]
which is a continuous function of $\ga \in (0, \infty)$.  It follows from Scheff\'{e}'s theorem that the corresponding distribution functions $\hF_a$ are continuous in the topology of weak convergence.  It then follows from Propositions~\ref{P:p is continuous} and~\ref{P:mono2} that the image in question is 
$(\hp_n(0+), \hp_n(\infty -))$.

But, as $\ga \to \infty$, it's easy to see that the density of $\ga$ times an observation converges pointwise to the density for independent standard Exponentials.  By Scheff\'{e}'s theorem and \refP{P:p is continuous}, therefore, $\hp_n(\infty -) = p^*_n$.

To compute $\hp_n(0+)$, we can without changing $\hp_n(\ga)$ take an observation $\hXX(\ga)$ to have coordinates that are $\ga$ times the logarithms of those described in our PA example.  According to 
\cite[Theorem~1]{Liu(2017)} and Slutsky's theorem, $\XX(\ga)$ converges in distribution to $(Y, \ldots, Y)$, where~$Y$ is standard Exponential.  By \refP{P:p is continuous}, therefore, $\hp_n(0+) = n^{-1}$.
\end{proof}

\begin{proof}[Proof of \refC{C:range}]
The corollary follows immediately from \refL{L:extremes} and Theorems \ref{T:rangeNRSPD}--\ref{T:rangePRSPD}.  For a considerably simpler proof, one can use the fact (from \refL{L:extremes}) that there are distributions $F_0$ and $F_1$ satisfying $p_n(F_0) = n^{-1}$ and $p_n(F_1) = 1$ for every~$n$.  By defining $F_q$ to be the $(1 - q, q)$ mixture of $F_0$ and $F_1$ for $q \in [0, 1]$, we see from 
\refP{P:p is continuous} (since $F_q$ is clearly continuous in~$q$ in the weak topology) and the intermediate value theorem that the image of $p_n$ on the domain $\{F_q: q \in [0, 1]\}$ contains (and therefore by \refL{L:extremes} equals) $[n^{-1}, 1]$.
\end{proof}

\begin{remark}
\label{R:general}
We have now learned from Theorems~\ref{T:rangeNRSPD}--\ref{T:rangePRSPD} information about how $p_n$ behaves as a function of the (continuous) distribution of~$\XX$.  As a complement, we conclude this paper with general (and rather more mundane) information about how $p_n$ behaves as a function of~$n$ and as a function of~$d$.

(a)~As already noted, from~\eqref{pformula} it is apparent that $p_n$ is nonincreasing in~$n$.  By the dominated convergence theorem,
\[
p_n \downarrow p_{\infty} := \int_{\xx:\,\P(\XX \geq \xx) = 0}\!\P(\XX \in \ddx \xx)
\]
as $n \uparrow \infty$.  For each fixed $d \geq 2$, the image of the mapping $p_{\infty}$ on the domain of all continuous distributions on $\bbR^d$ is the entire interval $[0, 1]$.  To see by example that $q \in [0, 1]$ is in the image, choose the distribution~$F$ of~$X$ to be the $(q, 1- q)$-mixture of any Dirichlet distribution and any of our marginalized-Dirichlet distributions $F_a$.

(b)~To make sense of the question of how $p_n$ varies as a function of $d \in \{1, 2, \ldots\}$, one should specify a \emph{sequence} of distributions, with the $d^{\rm \scriptsize th}$ distribution being over $\bbR^d$.  It is rather obvious that if $d' < d$ and $\XX(d')$ is obtained by selecting any deterministic set of $d'$ coordinates from $\XX(d)$, then $p_n(d') \leq p_n(d)$; in this sense, $p_n(d)$ is nondecreasing in the dimension~$d$.

Fix~$n \geq 1$, and for any specified sequence (in~$d$) of distributions of $\XX(d)$ let 
$p_n(\infty) := \lim_{d \to \infty} p_n(d)$.  The image of the mapping $p_n(\infty)$ on the domain of all sequences of continuous distributions is $[n^{-1}, 1]$.  This follows easily from \refC{C:range}.  Indeed, given 
$q \in [n^{-1}, 1]$, one can choose $\XX = \XX(2) = (X_1, X_2)$ giving $p_n(2) = q$ and then take 
$\XX(d) = (X_1, X_1, \ldots, X_1, X_2)$ for every~$d$.

For all of our standard examples (independent coordinates, our marginalized-Dirichlet distributions $F_a$, and our PA examples $\hF_a$) we have $p_n(\infty) = 1$.  In light of our earlier results, it is sufficient to prove this for the PA examples.  For that, since the Beta$(a, d)$ distributions converge weakly to unit mass at~$0$ as 
$d \to \infty$, it follows from the consequence
\begin{equation}
\label{pnPA}
\hp_n(a) = \E(1 - Z_{a, d}^a)^{n - 1}\mbox{\ where $Z_{a, d} \sim \mbox{Beta$(a, d)$}$}
\end{equation}
of the proof of \refP{P:mono2} that $p_n(\infty) = 1$.  
\end{remark}

\begin{acks}
We thank three anonymous reviewers for helpful comments.
\end{acks}

{\bf Competing interests:\ }The authors declare none.

\bibliography{records}
\bibliographystyle{plain}

\end{document}